\newcommand{\inner}[3][]{#1\langle #2,#3 #1\rangle}
\let\epsilon\varepsilon
\let\phi\varphi
\let\theta\vartheta
\DeclareMathOperator{\dd}{d\!}
\providecommand{\abs}[2][]{#1\lvert#2#1\rvert}
\providecommand{\norm}[2][]{#1\lVert#2#1\rVert}
\providecommand{\inv}{^{-1}}
\newtheorem{theorem}{Theorem}[section]
\newtheorem{proposition}[theorem]{Proposition}
\newtheorem{corollary}[theorem]{Corollary} 
\theoremstyle{definition}
\newtheorem{definition}[theorem]{Definition}
\newtheorem{example}[theorem]{Example}
\newtheorem{remark}[theorem]{Remark}
\newtheorem{conjecture}[theorem]{Conjecture}
\DeclareMathOperator{\tr}{tr}
\newcommand{\R}{\mathbb{R}}
\newcommand{\C}{\mathbb{C}}
\newcommand{\T}{\mathbb{T}}
\newcommand{\cS}{\mathcal{S}}
\begin{document}
\pagestyle{plain} 
\title{Projection operators on matrix weighted $L^p$
  and a simple sufficient Muckenhoupt condition} \author{Morten
  Nielsen and Morten Grud Rasmussen}  \date{\today}
\begin{abstract}
  Boundedness for a class of projection operators, which includes the
  coordinate projections, on matrix weighted $L^p$-spaces is
  completely characterised in terms of simple scalar conditions.
  Using the projection result, sufficient conditions, which are
  straightforward to verify, are obtained that ensure that a given
  matrix weight is contained in the Muckenhoupt matrix $A_p$
  class. Applications to singular integral operators with product
  kernels are considered.
\end{abstract}
\thanks{Supported by the Danish Council for Independent Research | Natural
Sciences, grant 12-124675, "Mathematical and Statistical Analysis of
Spatial Data".}
\subjclass[2010]{Primary: 47B37, 47B40. Secondary:  42B20}

\maketitle

\section{Introduction}
Singular integral operators form a natural generalisation of the
classical Hilbert transform, and the action of such operators on
$L^p(\R)$ has been studied in great detail. The theory was extended in
the '70's to include weighted $L^p$-spaces, with the seminal
contribution being the paper by Hunt, Wheeden, and Muckenhoupt
\cite{MR0312139}, where the Hilbert transform is shown to be bounded
on weighted $L^p$, $1<p<\infty$, if and only if the weight satisfy the
so-called Muckenhoupt $A_p$-condition. Even though the
$A_p$-conditions are quite involved, the Hunt, Wheeden, and
Muckenhoupt results are still very much operational since quite large
classes of, e.g., polynomial weights are known to satisfy the
respective conditions, see \cite{MR890662}.

A further generalisation of the Hilbert transform result to a vector
valued setup is straightforward in the non-weighted case, but it posed
a long-standing challenge to find a suitable generalisation in the
(matrix-)weigh\-ted setup. A breakthrough came with the results
\cite{MR1428818,MR1478786} of Treil and Volberg for $p=2$. This lead
to a correct definition of matrix $A_p$ weights for $1<p<\infty$, see
\cite{MR1423034}. The reader may consult \cite{MR2737763} for an
application of the Treil-Volberg result and the $A_2$-matrix condition
to applied harmonic analysis.

The matrix $A_p$ condition is considerably more complicated than the
scalar condition, and there are no known straightforward sufficient
conditions on a matrix weight to ensure membership in the $A_p$ class
except in very special cases (e.g., for diagonal weights and for
weights with strong pointwise bounds on its spectrum).  Bloom
\cite{MR805955,MR996354} has considered sufficient conditions for the
matrix $A_2$-condition in terms of certain weighted BMO-spaces.

In the present paper, we study and characterise a family of projection
operators on matrix weighted $L^p$. The family contains the coordinate
projections as special cases. The characterisation is given in terms
of simple scalar conditions. We then apply the projection result in
conjunction with the Treil-Volberg characterisation of matrix
$A_p$-weights to obtain a simple sufficient condition for a matrix to
satisfy the $A_p$-condition. We show that the new sufficient condition
covers many known examples of non-trivial matrix $A_p$ weights, such as
the ones considered by Bownik in \cite{MR1857041}. However, we do
provide an example of an $A_2$ matrix weight violating our condition
so the condition is {\em not} exhaustive.

As an application of the theory, we consider a family of singular
integral operators with product type kernels in the matrix weighted
setup. 

\section{Muckenhoupt matrix weights}
Let us first give a brief review of the $A_p$ condition following
\cite{MR1423034}.  We consider a domain $D\in\{\R^d, \R^m\times \R^n,
\T^d, \T^m\times \T^n\}$, where $\T^d$ denotes the $d$-dimensional
torus, and an associated measurable map $W\colon D\rightarrow
\C^{N\times N}$, with values in the non-negative definite matrices.

We introduce a family $\mathcal{S}_D$ of subsets of $D$. For
$D\in\{\R^d, \T^d\}$, $\mathcal{S}_D$ is the collection of all
Euclidean balls in $D$, while in the product case, i.e.\ when $D\in\{
\R^m\times \R^n, \T^m\times \T^n\}$, $\mathcal{S}_D$ is the collection
of all product sets $B_r\times B_{r'}$ where $B_r$ is a ball in $
\R^m$ [$ \T^m$] and $B_{r'}$ is a ball in $ \R^n$ [$\T^n$].  We define
the following family of metrics:
\begin{equation*}
  \rho_t(x)=\|W^{1/p}(t)x\|,\qquad
  x\in\C^N,\quad t\in D,
\end{equation*}
with the dual metric given by
\begin{equation*}
  \rho_t^*(x):=\sup_{y\not= 0} \frac{|\langle
    x,y\rangle|}{\rho_t(y)}=
  \|W^{-1/p}(t)x\|, \qquad x\in\C^N,\quad t\in D.
\end{equation*}
We now average $\rho_t$ over $E\in \mathcal{S}_D$
\begin{equation*}
  \rho_{p,E}(x):=\left (\frac1{|E|}\int_E
    [\rho_t(x)]^p\,\dd t\right)^{1/p},
\end{equation*}
and likewise for the dual metric
\begin{equation*}
  \rho^*_{p,E}(x):=\left (\frac1{|E|}\int_E [\rho^*_t(x)]^{q}\,\dd t\right)^{1/q},
\end{equation*}
with $q$ being $p$'s H\"older conjugate, $1=p\inv+q\inv$.

The $A_p$ condition can then be stated as follows.
\begin{definition}
  For $1<p<\infty$, we say that $W$ is an $A_p(N,D,\cS_D)$
  matrix weight if $W\colon D\rightarrow \C^{N\times N}$ is measurable
  and positive definite a.e.\ such that $W$ and $W^{-q/p}$ are locally
  integrable and there exists $C<\infty$ such that
  \begin{equation}\label{eq:Ap}
    \rho_{q,E}^*\leq C(\rho_{p,E})^*,\qquad E\in\mathcal{S}_D.
  \end{equation}
\end{definition}
\begin{remark}\label{rem:a}
  Notice that $\rho_t^*(x)=\|(W^{-q/p})^{1/q}(t)x\|$ so  
  \begin{equation}\label{eq:dual}
    W\in A_p(N,D,\cS_D) \textup{ if and only if } W^{-q/p}\in A_{q}(N,D,\mathcal{S}_D).
  \end{equation}
\end{remark}
In the following, we will sometimes relax the notation
$A_p(N,D,\cS_D)$ by leaving out the $N$, the $D$, and/or the $\cS_D$
if their values are clear from the context. Note that $A_p(1)$ is
simply the set of scalar Muckenhoupt weights.

Roudenko introduced an equivalent condition to \eqref{eq:Ap} in
\cite{MR1928089} which is often more straightforward to verify. In
fact, Roudenko only considered the case $D=\R^d$, but the reader can
easily verify that her proof in \cite{MR1928089} works verbatim in the
product and/or torus setup too. Condition \eqref{eq:Ap} holds if and
only if $W\colon D\rightarrow \C^{N\times N}$ is measurable and positive
definite a.e.\ such that $W$ and $W^{-q/p}$ are locally integrable and
there exists $C'<\infty$ such that
\begin{equation}\label{eq:Roudenko}
  \int_E\left( \int_E \big\|W^{1/p}(x)W^{-1/p}(t)\big\|^{q} \frac{\dd t}{|E|}\right)^{p/q} \frac{\dd x}{|E|}\leq C',\qquad E\in \mathcal{S}_D.
\end{equation}
  
For scalar weights defined on $\R^m\times \R^n$, it is well-known that
a product Muckenhoupt condition implies a uniform Muckenhoupt
condition in each variable, see \cite{MR766959}. Condition \eqref{eq:Roudenko} can be used to prove a
similar result for product matrix weights. We have the following
result.
  
\begin{proposition}\label{prop:Rou}
  Suppose $W\in A_p(\R^m\times \R^n)$. Then the weight $x\mapsto
  W(x,y)$, obtained by fixing the variable $y\in \R^n$ is uniformly in
  $A_p(\R^m)$ for a.e.\ $y\in \R^n$.
\end{proposition}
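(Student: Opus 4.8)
The plan is to exploit that the Muckenhoupt condition must hold on \emph{every} set of $\cS_{\R^m\times\R^n}$, in particular on thin product slabs, and thereby recover the slice weight $W(\cdot,y_0)$ as a limit of averages of $W$ over such slabs. I would work from condition \eqref{eq:Ap} (equivalently from \eqref{eq:Roudenko}, as suggested in the text). Fix a ball $B\subset\R^m$; for $y_0\in\R^n$ and $\delta>0$ apply \eqref{eq:Ap} to the product set $E_\delta=B\times B(y_0,\delta)\in\cS_{\R^m\times\R^n}$, which gives
\[
  \rho^*_{q,E_\delta}(x)\le C\,(\rho_{p,E_\delta})^*(x),\qquad x\in\C^N,
\]
with $C$ the $A_p$ constant of $W$. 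The point is to let $\delta\to0$: each side should collapse, for a.e.\ $y_0$, to the analogous quantity built from the slice weight $W(\cdot,y_0)$ on the ball $B$, which is exactly the $A_p(\R^m)$ condition on $B$ for the slice, \emph{with the same constant} $C$.

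Three ingredients make this rigorous. First, by Fubini's theorem and the local integrability of $W$ and $W^{-q/p}$, for a.e.\ $y_0$ the slice $u\mapsto W(u,y_0)$ is a genuine matrix weight: measurable, a.e.\ positive definite, with $W(\cdot,y_0)$ and $W(\cdot,y_0)^{-q/p}$ locally integrable. Second, for fixed $x$,
\[
  \rho_{p,E_\delta}(x)^p=\frac1{|B(y_0,\delta)|}\int_{B(y_0,\delta)}\Bigl(\frac1{|B|}\int_B\|W^{1/p}(u,y)x\|^p\,du\Bigr)\,dy
\]
is the average over $B(y_0,\delta)$ of a function of $y$ locally integrable on $\R^n$ (dominated by $|x|^p\frac1{|B|}\int_B\|W(u,y)\|\,du$), so by Lebesgue's differentiation theorem it converges, for a.e.\ $y_0$, to $\rho^{y_0}_{p,B}(x)^p:=\frac1{|B|}\int_B\|W^{1/p}(u,y_0)x\|^p\,du$. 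Letting $x$ run through a countable dense subset of $\C^N$ and using the bound $|\rho_{p,E_\delta}(x)-\rho_{p,E_\delta}(x')|\le\rho_{p,E_\delta}(x-x')\le|x-x'|\bigl(\frac1{|B(y_0,\delta)|}\int_{B(y_0,\delta)}\frac1{|B|}\int_B\|W(u,y)\|\,du\,dy\bigr)^{1/p}$, whose right-hand side stays bounded as $\delta\to0$ at Lebesgue points of $y\mapsto\frac1{|B|}\int_B\|W(u,y)\|\,du$, this equicontinuity upgrades to convergence of the \emph{norm} $\rho_{p,E_\delta}$ to $\rho^{y_0}_{p,B}$ uniformly on the unit sphere of $\C^N$; the same argument, using $\|W^{-q/p}\|\in L^1_{\mathrm{loc}}$, gives $\rho^*_{q,E_\delta}\to\rho^{y_0,*}_{q,B}$. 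Third, uniform convergence of norms on the compact unit sphere of the finite-dimensional space $\C^N$, together with the resulting uniform two-sided comparability of the $\rho_{p,E_\delta}$ for small $\delta$, forces convergence of the dual norms, so $(\rho_{p,E_\delta})^*\to(\rho^{y_0}_{p,B})^*$ pointwise. Passing to the limit in the displayed inequality then yields $\rho^{y_0,*}_{q,B}\le C\,(\rho^{y_0}_{p,B})^*$, i.e.\ condition \eqref{eq:Ap} for $W(\cdot,y_0)$ on the ball $B$, for every $y_0$ outside a null set depending on $B$.

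It remains to make the exceptional null set independent of $B$. Intersecting the complements of these null sets over a countable family of balls in $\R^m$ with rational centres and radii, and also with the full-measure set from the first ingredient, yields a full-measure set of $y_0$ for which $W(\cdot,y_0)$ satisfies \eqref{eq:Ap} with constant $C$ on every ball of the countable family. Since $W(\cdot,y_0)$ and $W(\cdot,y_0)^{-q/p}$ are locally integrable, the averaged metrics $\rho^{y_0}_{p,B}$ and $\rho^{y_0,*}_{q,B}$ and their duals depend continuously on the ball $B$ (dominated convergence), so the condition propagates from the countable family to all balls $B\subset\R^m$. Hence $W(\cdot,y_0)\in A_p(\R^m)$ with $A_p$ constant at most $C$ for a.e.\ $y_0$, which is the assertion.

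I expect the limiting step — the second and third ingredients — to be the main obstacle. Lebesgue's differentiation theorem is being applied to families of functions depending on auxiliary parameters (the vector $x\in\C^N$ and, inside the averages, the point $u\in B$), so the real content is that the exceptional null sets can be chosen uniformly in these parameters; this is the role of the countable-dense-subset argument in $x$, of Fubini in $u$, and of the elementary but slightly delicate continuity of the passage from a norm on $\C^N$ to its dual. If one argues from \eqref{eq:Roudenko} instead, the scheme is identical, except that the limit must additionally be pushed through the two nested nonlinear averages present in \eqref{eq:Roudenko}; this is handled by a Fatou / lower-semicontinuity argument in the outer average combined with the Lebesgue-differentiation analysis above applied to the inner one.
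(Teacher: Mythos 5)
Your proposal is correct, but it takes a genuinely different route from the paper's. You work directly with the norm formulation \eqref{eq:Ap}: apply it to the thin rectangles $E_\delta=B\times B(y_0,\delta)$, use Lebesgue differentiation for each fixed direction $x$ (with a countable dense set of $x$'s and the equi-Lipschitz bound coming from the local integrability of $\|W\|$ and $\|W^{-q/p}\|$ to upgrade to uniform convergence on the unit sphere), pass the dual norm through the limit via two-sided comparability with the nondegenerate limit norm, and finally remove the $B$-dependence of the null set by rational balls plus continuity in $B$; all the delicate points (uniformity of the exceptional sets in $x$, positivity of the limiting slice norm via Fubini, dual-norm continuity) are identified and handled. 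The paper instead runs the same shrinking-rectangle idea through Roudenko's scalar reformulation \eqref{eq:Roudenko}: the $A_p$ bound on $B\times B_\epsilon$ is compared with the iterated average by H\"older/Jensen, which forces a case split --- $p\le q$ directly, and $q<p$ by passing to the dual weight $W^{-q/p}\in A_q$ via \eqref{eq:dual} --- after which Lebesgue differentiation in the $y$-variables gives the slice bound with the same constant. What your approach buys: no case split, no appeal to Roudenko's equivalence, and the differentiation theorem is only ever applied to one-variable averages of quantities involving $W^{1/p}$ alone or $W^{-1/p}$ alone, rather than to the mixed two-variable quantity $\|W^{1/p}(x,y)W^{-1/p}(x',y')\|$ averaged over $B_\epsilon\times B_\epsilon$ as in the paper; the price is the functional-analytic layer (convergence of dual norms, null-set bookkeeping) that the paper's purely scalar argument avoids, which keeps its proof shorter. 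Both arguments produce the slice $A_p$ bound with a constant controlled by the product $A_p$ constant, as required for the uniformity assertion.
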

\begin{proof}
  Given a ball $B\subset \R^m$, we let
  $B_\epsilon:=B_\epsilon(y)\subset \R^n$ be the ball of radius
  $\epsilon$ about $y\in \R^n$.  First suppose $p\leq q$. Since $W\in
  A_p(\R^m\times \R^n)$ there exists a constant $c_W$ independent of
  $B\times B_\epsilon$ such that
  \begin{align*}
    \frac{1}{|B_\epsilon|^2}&\int_{B_\epsilon}\int_{B_\epsilon}\bigg[
    \int_B\bigg(\int_B \|{W}^{1/p}(x,y)W^{-1/p}(x',y')\|^{q}
    \frac{\dd x'}{|B|}  \bigg)^{p/q}\frac{\dd x}{|B|}\bigg] \dd y\dd y'\\
    &\leq \int_{B_\epsilon} \int_B\bigg( \int_{B_\epsilon}\int_B
    \|{W}^{1/p}(x,y)W^{-1/p}(x',y')\|^{q}
    \frac{\dd x'\,\dd y'}{|B|\cdot |B_\epsilon|}  \bigg)^{p/q}\frac{\dd x\,\dd y}{|B|\cdot |B_\epsilon|}  \\
    &\leq c_W,
  \end{align*}
  where we have used H\"older's inequality.  Now we let
  $\epsilon\rightarrow 0$ and use Le\-bes\-gue's differentiation theorem
  to conclude that for almost every $y\in\R^n$,
  \begin{align*}
    \int_B\bigg(\int_B \|{W}^{1/p}(x,y){W}^{-1/p}(x',y)\|^{q}
    \frac{\dd x'}{|B|} \bigg)^{p/q}\frac{\dd x}{|B|}&\leq c_W.
  \end{align*}
  Since $c_W$ is independent of $B$, it follows that $x\mapsto
  W(x,y)$ is uniformly in ${A}_p(\R^m)$ for a.e.\ $y\in\R^n$.
  In the case $q<p$, we use that $W^{-q/p}\in {A}_{q}(\R^m\times
  \R^n)$ by \eqref{eq:dual}, which implies the following estimate
  \begin{equation*}
    \int_{B_\epsilon}
    \int_B\bigg( \int_{B_\epsilon}\int_B \|{W}^{1/p}(x,y)W^{-1/p}(x',y')\|^{p}
    \frac{\dd x'\,\dd y'}{|B|\cdot |B_\epsilon|}  \bigg)^{q/p}\frac{\dd x\,\dd y}{|B|\cdot |B_\epsilon|}  \leq c_W.
  \end{equation*}  
  By repeating the argument from the $p\leq q$ case, we conclude that the map
  $x\mapsto {W}^{-q/p}(x,y)$ is in ${A}_{q}(\R^m)$ for a.e.\
  $y\in\R^n$ which again by \eqref{eq:dual} implies that $x\mapsto
  {W}(x,y)$ is in $A_p(\R^m)$ for a.e.\ $y\in\R^n$.
\end{proof}
 
A similar result clearly holds true for the weight $y\mapsto
W(x,y)$. The periodic case, i.e. the case $W\in A_p(\T^m\times \T^n)$,
is also similar.

\section{Projection operators}
Recall that at scalar weight is a measurable function which is
positive a.e. If $w\colon D\to\C$ is a scalar weight, we define the
weighted space $L^p(w)$ as the set of measurable functions $f\colon D\to\C$
for which
\begin{equation*}
  \norm{f}_{L^p(w)}:=\Biggl(\int_D\abs{f}^pw\dd\mu\Biggr)^{1/p}
\end{equation*}
is finite, where $\mu$ is the measure on $D$. Likewise, if $W\colon
D\to\C^{N\times N}$ is a matrix-valued function which is measurable
and positive definite a.e., then the space $L^p(W)$ is the set of
measurable functions $f\colon D\to\C^N$ with 
\begin{equation*}
  \norm{f}_{L^p(W)}:=\Biggl(\int_D\abs{W^{1/p}f}^p\dd\mu\Biggr)^{1/p}<\infty.
\end{equation*}
Obviously, in order to turn  $L^p(w)$ and $L^p(W)$ into Banach spaces, one has to factorize over 
$\{f\colon D\to\C;\norm{f}_{L^p(w)}=0\}$ and $\{f\colon D\to\C^N;\norm{f}_{L^p(W)}=0\}$, respectively.
We can now state our main result giving a full characterization of a certain class of projections from $L^p(W)$ to $L^p(w)$.
\begin{theorem}\label{th:projection}
  Let $W=(w_{ij})_{i,j=1}^N\colon D\to\C^{N\times N}$ be a
  matrix-valued function which is measurable and positive definite
  a.e., $w\colon D\to\C$ be a scalar weight and $r\colon D\to \C^{N}$
  be a unit vector valued function. Then the projection in the
  direction of $r$, $P_r\colon L^p(W)\to L^p(w)$ given by
  $P_r(f)=\inner{f}{r}$ is bounded if and only if
  \begin{equation}
    w^{\frac{1}{p}}\norm{W^{-\frac{1}{p}}r}\in L^\infty\label{eq:projectionthm1}.
  \end{equation}
  In particular, if we denote the entries of powers of $W$ by
  $W^{s}=(w_{ij}^{(s)})$, where $s$ is any real number, then
  $P_k=P_{e_k}\colon L^p(W)\to L^p(w_{kk})$ is bounded if and only if
  \begin{equation}
w_{kk}^{\frac{2}{p}}{w_{kk}^{(-\frac{2}{p})}}\in L^\infty,\label{eq:projectionthm2}
\end{equation}
and if $\lambda_i\colon D\to\R$ and $v_i\colon D\to C^N$ are
eigenvalues and eigenvectors, respectively, of $W$, then the
projection $P_{v_i}\colon L^p(W)\to L^p(\lambda_i)$ is always bounded.
\end{theorem}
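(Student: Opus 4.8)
The plan is to reduce the boundedness of $P_r$ to a pointwise statement about the integrands, since there are no cancellation or regularity issues here — everything is local and diagonal in $t$. First I would unwind the definitions: $P_r$ is bounded means there is $C$ with $\int_D |\langle f(t),r(t)\rangle|^p w(t)\,\dd\mu(t) \le C^p \int_D |W^{1/p}(t)f(t)|^p\,\dd\mu(t)$ for all $f\in L^p(W)$. The key observation is that this is really a family of pointwise inequalities. Writing $g(t) = W^{1/p}(t)f(t)$, so that $f(t) = W^{-1/p}(t)g(t)$ and the substitution is a bijection on measurable $\C^N$-valued functions, the inequality becomes $\int_D |\langle W^{-1/p}(t)g(t), r(t)\rangle|^p w(t)\,\dd\mu(t) \le C^p\int_D |g(t)|^p\,\dd\mu(t)$. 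Now $\langle W^{-1/p}(t)g(t),r(t)\rangle = \langle g(t), W^{-1/p}(t)r(t)\rangle$ using that $W^{-1/p}(t)$ is self-adjoint, and by Cauchy--Schwarz the sharp pointwise bound on $|\langle g(t),W^{-1/p}(t)r(t)\rangle|$ over all values $g(t)$ of unit norm is exactly $\norm{W^{-1/p}(t)r(t)}$, attained by aligning $g(t)$ with $W^{-1/p}(t)r(t)$.

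From this, the ``if'' direction is immediate: if $h(t) := w^{1/p}(t)\norm{W^{-1/p}(t)r(t)} \le M$ a.e., then pointwise $|\langle g(t),W^{-1/p}(t)r(t)\rangle|^p w(t) \le M^p |g(t)|^p$, and integrating gives $\norm{P_r f}_{L^p(w)} \le M \norm{f}_{L^p(W)}$. For the ``only if'' direction I would argue by contradiction: if $h\notin L^\infty$, then for every $n$ the set $E_n = \{t : h(t) > n\}$ has positive measure; pick a subset $E_n'$ of finite positive measure, and on $E_n'$ choose $g_n(t)$ to be the unit vector in the direction of $W^{-1/p}(t)r(t)$ (measurably — this requires a small measurable-selection remark, or one can just normalize $W^{-1/p}(t)r(t)$, which is fine since it is nonzero a.e. as $W^{-1/p}(t)$ is invertible and $r(t)$ is a unit vector), and $0$ elsewhere, giving $f_n = W^{-1/p}g_n$. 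Then $\norm{f_n}_{L^p(W)}^p = |E_n'|$ while $\norm{P_r f_n}_{L^p(w)}^p = \int_{E_n'} h(t)^p\,\dd\mu(t) > n^p|E_n'|$, so the ratio of norms exceeds $n$, contradicting boundedness.

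For the particular cases: taking $r = e_k$ constant gives $\norm{W^{-1/p}(t)e_k}^2 = \sum_i |w_{ik}^{(-1/p)}(t)|^2 = (W^{-2/p}(t))_{kk} = w_{kk}^{(-2/p)}(t)$, where the middle equality is because $W^{-1/p}$ is self-adjoint so $(W^{-2/p})_{kk} = \langle W^{-1/p}e_k, W^{-1/p}e_k\rangle$; since $P_k$ maps into $L^p(w_{kk})$ here, condition \eqref{eq:projectionthm1} becomes $w_{kk}^{1/p}(w_{kk}^{(-2/p)})^{1/2} \in L^\infty$, and raising to the square power (harmless for the $L^\infty$ condition) yields exactly \eqref{eq:projectionthm2}. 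Finally, for $r = v_i$ a unit eigenvector with eigenvalue $\lambda_i > 0$, we have $W^{-1/p}(t)v_i(t) = \lambda_i(t)^{-1/p} v_i(t)$, so $\norm{W^{-1/p}(t)v_i(t)} = \lambda_i(t)^{-1/p}$, and with $w = \lambda_i$ the quantity in \eqref{eq:projectionthm1} is $\lambda_i^{1/p}\lambda_i^{-1/p} = 1 \in L^\infty$, so $P_{v_i}$ is always bounded. I expect the only mildly delicate point to be the measurability of the test functions $g_n$ in the ``only if'' direction, which is handled by simply normalizing the (a.e.\ nonzero, measurably varying) vector field $W^{-1/p}(t)r(t)$; everything else is a direct computation.
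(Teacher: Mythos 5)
Your proof is correct, and it arrives at the same essential insight as the paper — the extremal test field is the same in both arguments: your $f_n=W^{-1/p}g_n$ with $g_n=W^{-1/p}r/\norm{W^{-1/p}r}$ is exactly the paper's test vector $W^{-2/p}r/\norm{W^{-1/p}r}$, and both directions ultimately rest on the pointwise Cauchy--Schwarz bound $\abs{\langle W^{1/p}f,W^{-1/p}r\rangle}\le\norm{W^{1/p}f}\,\norm{W^{-1/p}r}$. The execution, however, differs in a way worth noting. For sufficiency the paper runs a duality argument, pairing $P_rf$ against $\psi\in L^q(w)$ and using H\"older, whereas you integrate the pointwise bound directly after the isometric substitution $g=W^{1/p}f$; your route is more elementary and makes transparent that $P_r$ is nothing but a pointwise multiplier from unweighted $L^p(\C^N)$ to $L^p(\C)$ whose operator norm is $\operatorname{ess\,sup}\, w^{1/p}\norm{W^{-1/p}r}$. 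For necessity the paper tests against translated approximate identities and invokes Lebesgue differentiation to recover the pointwise bound at a.e.\ point, while you test against indicator functions of finite-measure subsets of the super-level sets $\{w^{1/p}\norm{W^{-1/p}r}>n\}$; this avoids the differentiation step (and any implicit local-integrability bookkeeping it entails) at the cost of only the routine measurability remark you already supply, namely that $W^{-1/p}(t)r(t)$ is measurable and nonzero a.e.\ so it can be normalized. Your treatment of the two special cases ($r=e_k$ giving $\norm{W^{-1/p}e_k}^2=w_{kk}^{(-2/p)}$, and the eigenvector case giving the constant $1$) coincides with the paper's.
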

\begin{proof}
  We begin with the necessity part. Assume therefore that $P_r$ is
  bound\-ed. Then there exists a constant $C$ such that
  $C\norm{f}_{L^p(W)}^p\ge \norm{P_rf}_{L^p(w)}^p$. Now let
  $\{f_\epsilon\}_{\epsilon>0}$ , $f_\epsilon\colon D\to\C$ be an
  approximate identity and $T_k$ the translation operator,
  $T_kf=f(\cdot-k)$. Then
  \begin{align*}
    C&=C\norm[\Big]{T_kf_\epsilon^{\frac{1}{p}}\tfrac{W^{-\frac{1}{p}}r}{\norm{W^{-\frac{1}{p}}r}}}_p^p=C\norm[\Big]{T_kf_\epsilon^{\frac{1}{p}}\tfrac{W^{-\frac{2}{p}}r}{\norm{W^{-\frac{1}{p}}r}}}_{L^p(W)}^p\\
    &\ge \norm[\Big]{T_kf_\epsilon^{\frac{1}{p}}\inner[\big]{\tfrac{W^{-\frac{2}{p}}r}{\norm{W^{-\frac{1}{p}}r}}}{r}}_{L^p(w)}^p=\norm[\big]{T_kf_\epsilon^{\frac{1}{p}}\norm{W^{-\frac{1}{p}}r}}_{L^p(w)}^p\\
    &=\norm[\big]{T_kf^{\frac{1}{p}}_\epsilon w^{\frac{1}{p}}\norm{W^{-\frac{1}{p}}r}}_p^p.
  \end{align*}
  Letting $\epsilon\to0$ we get that
  $w^{\frac{1}{p}}(k)\norm{W^{-\frac{1}{p}}(k)r(k)}\le C^{\frac{1}{p}}$ for a.e.\
  $k$. 

  We now show that essential boundedness of
  $w^{\frac{1}{p}}\norm{W^{-\frac{1}{p}}r}$ implies boundedness of
  $P_r$. Assume therefore that
  $w^{\frac{1}{p}}\norm{W^{-\frac{1}{p}}r}\le C$ a.e. Write $q$ for
  the H\"older conjugate of $p$. Then for every $\psi\in L^q(w)$:
  \begin{align*}
    C\norm{f}_{L^p(W)}\norm{\psi}_{L^q(w)}&\ge\norm{f}_{L^p(W)}\Bigl(\int\abs{\psi w^{\frac{1}{q}}}^q\bigl(\abs{w}^{\frac{1}{p}}\norm{W^{-\frac{1}{p}}r}\bigr)^q\dd\mu\Bigr)^{\frac{1}{q}}\\
    &=\norm{f}_{L^p(W)}\Bigl(\int\abs{\psi w}^q\norm{W^{-\frac{1}{p}}r}^q\dd\mu\Bigr)^{\frac{1}{q}}\\
    &\ge\int\norm{W^{\frac{1}{p}}f}\norm{W^{-\frac{1}{p}}r\psi w}\dd\mu\\
    &\ge\int\abs{\inner{W^{\frac{1}{p}}f}{W^{-\frac{1}{p}}r\psi w}}\dd\mu\\
    &=\int\abs{P_rf\psi w}\dd\mu,
  \end{align*}
  so $P_r\colon L^p(W)\to L^p(w)$ is bounded.

  Note that \eqref{eq:projectionthm1} reduces to
  \eqref{eq:projectionthm2} when $r\equiv e_k$, the constant function
  with all coordinates except the $k$'th being zero, and
  $w=w_{kk}$. Indeed,
  \begin{equation*}
    \norm{W^{-\frac{1}{p}}e_k}^2=\inner{W^{-\frac{1}{p}}e_k}{W^{-\frac{1}{p}}e_k}=\inner{e_k}{W^{-\frac{2}{p}}e_k}=w_{kk}^{(-\frac{2}{p})}.
  \end{equation*}
  We finish the proof of the theorem by noting that 
  \begin{equation*}
    \lambda_i^{\frac{1}{p}}\norm{W^{-\frac{1}{p}}v_i}=1\quad\textup{a.e.},
  \end{equation*}
  which is clearly in $L^\infty$.
\end{proof}

\section{A sufficient matrix Muckenhoupt condition}

Here we consider an application of the projection result to derive
operational sufficient conditions for a matrix weight to be in the
Muckenhoupt $A_p$ class. The matrix $A_p$ condition introduced in
\cite{MR1423034} is rather involved and it may be difficult to verify
for a given matrix. The simpler $A_2$-case was settled in \cite{MR1428818,MR1478786}. An additional advantage of the projection approach
is that it applies to both the regular matrix $A_p$ condition and to
the corresponding product setup.

We will need the following  fundamental characterization of the matrix
condition $A_p$, see \cite{MR2015733}. Recall that the Riesz transform $R_j\colon L^p(\R^d)\to L^p(\R^d)$ is given by
$$\mathcal{F}(R_j f)(\xi)=i\,\frac{\xi_j}{\xi}\mathcal{F}f(\xi),\qquad j=1,2,\dotsc, d.$$
\begin{theorem}\label{th:Treil}
  Let $W=(w_{ij})\colon \R^d\to\C^{N\times N}$ be a matrix-valued
  function which is measurable and positive definite a.e. Then $W\in
  A_p(N,\R^d)$, $1<p<\infty$, if and only if the Riesz transforms $R_j\colon
  L^p(W)\rightarrow L^p(W)$ are bounded for all $j=1,2,\dotsc,d$.
\end{theorem}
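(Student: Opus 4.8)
The plan is to establish the two implications separately. Throughout, write $\sigma:=W^{-q/p}$ for the dual weight, so that the pointwise dual metric is $\rho_t^*(x)=\norm{\sigma^{1/q}(t)x}$ and, as usual, the dual of $L^p(W)$ identifies with $L^q(\sigma)$ under the pairing $\inner{f}{g}=\int_D\inner{f(x)}{g(x)}\dd x$. Since the Riesz transforms are anti-self-adjoint, $R_j^*=-R_j$, boundedness of $R_j$ on $L^p(W)$ is equivalent to boundedness of $R_j$ on $L^q(\sigma)$; as this mirrors the symmetry $W\in A_p\iff\sigma\in A_q$ of Remark~\ref{rem:a}, it is enough to prove one of the two equivalences.

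\textbf{Necessity of $A_p$ --- the testing direction.} Fix a ball $Q$ of radius $r$ and choose two translates $Q^-,Q^+$ of $Q$ placed so that, for $x\in Q^+$ and $y\in Q^-$, the Riesz kernel $K_j(x,y)=c_d(x_j-y_j)\abs{x-y}^{-d-1}$ has constant sign and is comparable to $\abs{Q}^{-1}$ (a concentric shift by, say, $3r$ in the $x_j$-direction works). Given a measurable $\psi\colon Q^-\to\C^N$, apply $R_j$ to $f=\psi\mathbf 1_{Q^-}$: since $K_j$ is essentially a positive constant on $Q^+\times Q^-$, one gets $R_jf(x)\approx\abs{Q}^{-1}\Psi$ on $Q^+$, where $\Psi:=\int_{Q^-}\psi$, so $\norm{R_jf}_{L^p(W)}^p\gtrsim\abs{Q}^{1-p}\,[\rho_{p,Q^+}(\Psi)]^p$, while $\norm{f}_{L^p(W)}^p=\int_{Q^-}\norm{W^{1/p}(y)\psi(y)}^p\dd y$. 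Boundedness of $R_j$ then gives $\abs{Q}^{1-p}[\rho_{p,Q^+}(\Psi)]^p\lesssim\int_{Q^-}\norm{W^{1/p}(y)\psi(y)}^p\dd y$; taking the infimum of the right-hand side over all $\psi$ with $\int_{Q^-}\psi=\Psi$ and using $L^p$--$L^q$ duality to identify that infimum with $\abs{Q}^{-p/q}\bigl[(\rho^*_{q,Q^-})^*(\Psi)\bigr]^p$, the powers of $\abs{Q}$ cancel (recall $p-1=p/q$) and we obtain $\rho_{p,Q^+}(\Psi)\lesssim(\rho^*_{q,Q^-})^*(\Psi)$ for every $\Psi\in\C^N$, with a constant independent of $Q$. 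Together with the inequality obtained by interchanging $Q^+$ and $Q^-$, and a standard covering argument (as in the scalar proof that the Hilbert transform is bounded only for $A_p$ weights) passing from the neighbouring pair to a single $E\in\cS_D$, this yields $\rho_{p,E}\lesssim(\rho^*_{q,E})^*$ uniformly in $E$, which is the dual formulation of \eqref{eq:Ap}; the required local integrability of $W$ and $\sigma$ falls out of the same estimates. (One may equally feed the kernel lower bound into Roudenko's condition \eqref{eq:Roudenko}.)

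\textbf{Sufficiency of $A_p$ --- the Calder\'on--Zygmund direction.} This is the substantial half, and here I would rely on the cited literature. One route settles $p=2$ first: for $W\in A_2$ the Treil--Volberg theorem \cite{MR1428818,MR1478786} gives boundedness of the Riesz transforms on $L^2(W)$, and one then passes to general $1<p<\infty$ via a matrix-weighted Rubio de Francia extrapolation. A more self-contained route dominates each $R_j$ by dyadic model operators --- Haar shifts, or sparse operators --- and proves the matrix-weighted estimate for those directly. In either approach the decisive tool is the system of Treil--Volberg \emph{reducing operators}: to each dyadic cube $Q$ one attaches positive-definite matrices $\mathcal V_Q,\mathcal V_Q^\sharp$ for which $\norm{\mathcal V_Q u}$ is comparable to $\bigl(\tfrac1{\abs{Q}}\int_Q\norm{W^{1/p}(t)u}^p\dd t\bigr)^{1/p}$ and $\norm{\mathcal V_Q^\sharp u}$ to $\bigl(\tfrac1{\abs{Q}}\int_Q\norm{\sigma^{1/q}(t)u}^q\dd t\bigr)^{1/q}$, uniformly in $u\in\C^N$; the matrix $A_p$ condition then reduces to the scalar bound $\sup_Q\norm{\mathcal V_Q\mathcal V_Q^\sharp}<\infty$, which one inserts into a Bellman-function / weighted Carleson-embedding argument for the model operators.

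\textbf{The main obstacle.} The genuine difficulty is the sufficiency direction, and within it the step with no scalar analogue: for $p\neq2$ the functional $u\mapsto\norm{W^{1/p}(\cdot)u}^p$ is not a quadratic form, so the reducing operators are only comparable to --- not equal to --- the natural averages, and the resulting loss of Hilbert-space structure forces the Bellman/stopping-time estimates for the Haar shifts (or, on the extrapolation route, the extrapolation machinery itself) to be carried out in a genuinely matrix-valued way rather than by a scalar $A_\infty$ argument. As this analysis is precisely the content of \cite{MR1428818,MR1478786,MR2015733}, I would in the end cite those works for sufficiency and give only the short testing argument above for necessity.
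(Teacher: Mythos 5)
First, a point of comparison with the paper itself: Theorem~\ref{th:Treil} is not proved in the paper at all; it is imported as a known result with the citation \cite{MR2015733} (together with \cite{MR1428818,MR1478786,MR1423034} for the surrounding theory). So your decision to defer the sufficiency direction to the Treil--Volberg/Goldberg literature is exactly what the paper does -- in fact the paper defers \emph{both} directions -- and your description of the reducing-operator/Bellman machinery, while only a road map, is consistent with where the real work lies.

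The problem is the necessity argument, which you present as an essentially complete proof but which has two genuine gaps. (1) The step ``$R_jf(x)\approx\abs{Q}^{-1}\Psi$ on $Q^+$'' does not follow from the kernel $K_j$ having constant sign and size comparable to $\abs{Q}^{-1}$: your test functions $\psi$ are vector-valued with no sign restriction, and after the optimization over $\{\psi:\int_{Q^-}\psi=\Psi\}$ the near-extremal $\psi$ are essentially $W^{-q/p}(\cdot)v$ for a dual-extremal vector $v$, i.e.\ oscillating and concentrated where $W$ is degenerate; comparability of a kernel gives no lower bound on $\abs[\big]{\int_{Q^-}K_j(x,y)\psi(y)\dd y}$ for such $\psi$. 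One can make the kernel nearly constant by taking a large separation $A r$ instead of $3r$, writing $K_j(x,y)=K_j(x,y_0)+O(A^{-1}\abs{Q}^{-1})$, but then the error term is bounded only by $\norm{W^{1/p}(x)}\int_{Q^-}\abs{\psi}$, a quantity that is \emph{not} controlled by $\norm{f}_{L^p(W)}$ and which blows up precisely along the minimizing sequence in your infimum; so the inequality $\abs{Q}^{1-p}[\rho_{p,Q^+}(\Psi)]^p\lesssim\int_{Q^-}\norm{W^{1/p}\psi}^p\dd y$ does not survive the duality step as written. The published proofs handle this with a more careful choice of test configurations (or by first extracting uniform boundedness of the averaging operators), and this is where the actual content of the necessity direction sits. (2) The passage from the separated-pair inequality $\rho_{p,Q^+}(\Psi)\lesssim(\rho^*_{q,Q^-})^*(\Psi)$ to the same-ball condition \eqref{eq:Ap} is not a ``standard covering argument'': averages over a ball dominate averages over sub-balls only in one direction, chaining $E\to E'\to E$ is circular, and even in the scalar Hunt--Muckenhoupt--Wheeden proof this step needs a specific additional argument (a case analysis on where the mass of $w^{-q/p}$ sits), not a covering. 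As it stands, then, your sketch does not yet prove either implication; for the purposes of this paper the correct move is the one the paper makes, namely to quote the full equivalence from \cite{MR2015733}, and if you want to include a self-contained necessity proof you need to repair (1) and (2) along the lines of the cited sources.
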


We now consider the product setup. For simplicity we focus on the case $D=\R^m\times\R^n$. The case $D=\T^m\times\T^n$ can be treated in a similar fashion. We write $z=(x,y)\in D$ with $x\in \R^m$ and $y\in\R^n$. Let 
$\tilde{R}_{i}^x$ denote the operator $R_i^x\otimes \text{Id}_y$, where $R_i^x$ is the Riesz transform acting on $\R^m$.  Similarly, we let $\tilde{R}_{j}^y$ denote the operator $\text{Id}_x\otimes  R_j^y$, where $R_j^y$ is the Riesz transform acting on $\R^n$.  We have the following Corollary to Theorem \ref{th:Treil}.

\begin{corollary}\label{cor:treil}
Let $D=\R^m\times\R^n$ and let $W=(w_{ij})\colon D \to\C^{N\times N}$ be a matrix-valued
  function which is measurable and positive definite a.e. Then $W\in
  A_p(N,D,\cS_D)$, $1<p<\infty$, if and only if $\tilde{R}_{i}^x,\tilde{R}_j^y\colon
  L^p(W)\rightarrow L^p(W)$ are bounded for $i=1,2,\dotsc,m$ and $j=1,2,\dotsc,n$.
\end{corollary}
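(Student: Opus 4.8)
The plan is to deduce Corollary \ref{cor:treil} from Theorem \ref{th:Treil} by a tensorisation argument, reducing the product statement to two applications of the one-parameter result (one in the $x$-variable, one in the $y$-variable) combined with Proposition \ref{prop:Rou}. First I would establish the \emph{necessity} direction: assume $W\in A_p(N,D,\cS_D)$. By Proposition \ref{prop:Rou}, for a.e.\ fixed $y$ the weight $x\mapsto W(x,y)$ lies in $A_p(N,\R^m)$ with an $A_p$-constant bounded uniformly in $y$; Theorem \ref{th:Treil} then gives that $R_i^x$ acting on $L^p(W(\cdot,y))$ is bounded with operator norm bounded uniformly in $y$. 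Integrating the resulting pointwise-in-$y$ inequality $\int_{\R^m}\abs{W^{1/p}(x,y)(R_i^x f(\cdot,y))(x)}^p\,\dd x\le C\int_{\R^m}\abs{W^{1/p}(x,y)f(x,y)}^p\,\dd x$ over $y\in\R^n$ and using Fubini gives boundedness of $\tilde R_i^x=R_i^x\otimes\mathrm{Id}_y$ on $L^p(W)$; the symmetric argument (using the analogue of Proposition \ref{prop:Rou} in the $y$-variable) handles $\tilde R_j^y$.

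For the \emph{sufficiency} direction, suppose all $\tilde R_i^x$ and $\tilde R_j^y$ are bounded on $L^p(W)$. Here I would run the uniformisation argument in reverse. Boundedness of $\tilde R_i^x$ on $L^p(W)$, tested on functions of product type $f(x,y)=g(x)h(y)$ and using that $h$ ranges over a dense set of $L^p(\R^n)$, forces — by a Lebesgue differentiation / slicing argument analogous to the one in Proposition \ref{prop:Rou} — that $R_i^x$ is bounded on $L^p(W(\cdot,y))$ for a.e.\ $y$, uniformly in $y$, and similarly $R_j^y$ is bounded on $L^p(W(x,\cdot))$ for a.e.\ $x$, uniformly in $x$. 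By Theorem \ref{th:Treil}, $x\mapsto W(x,y)\in A_p(N,\R^m)$ uniformly in $y$ and $y\mapsto W(x,y)\in A_p(N,\R^n)$ uniformly in $x$. It then remains to show that these two \emph{separate} uniform one-parameter $A_p$ conditions imply the genuine product condition \eqref{eq:Ap} on $\cS_D$; I would verify this via the Roudenko reformulation \eqref{eq:Roudenko}, splitting the double average over $E=B\times B'$ into the $x$-average and the $y$-average and applying the one-parameter bound in each factor successively (handling the cases $p\le q$ and $q<p$ as in the proof of Proposition \ref{prop:Rou}, passing through $W^{-q/p}$ and \eqref{eq:dual} when $q<p$).

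A technical point worth spelling out is the passage from boundedness of the tensor operators to the slice-wise statements: one uses that $\tilde R_i^x(g\otimes h)=(R_i^x g)\otimes h$, chooses $h=h_\delta$ an approximate identity concentrated near a point $y_0$, and applies Lebesgue differentiation in $y$ to recover the one-parameter inequality at a.e.\ $y_0$ with constant independent of $y_0$; density of finite sums of product functions in $L^p(W)$ then need not even be invoked for this direction, since one only needs the test functions to witness the lower bound on the operator norm.

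The main obstacle I anticipate is the last step of the sufficiency direction — showing that the two uniform one-parameter conditions combine into the product condition. The one-parameter conditions control $\int_B\bigl(\int_B \norm{W^{1/p}(x,y)W^{-1/p}(x',y)}^q\tfrac{\dd x'}{|B|}\bigr)^{p/q}\tfrac{\dd x}{|B|}$ for a.e.\ fixed $y$ and the analogous quantity in $y$, but \eqref{eq:Roudenko} requires controlling the mixed quantity with $W^{-1/p}(x',y')$ where \emph{both} slots are averaged. Bridging this gap requires inserting an intermediate matrix $W^{-1/p}(x',y)$ (or $W^{-1/p}(x,y')$), estimating $\norm{W^{1/p}(x,y)W^{-1/p}(x',y')}\le\norm{W^{1/p}(x,y)W^{-1/p}(x',y)}\,\norm{W^{1/p}(x',y)W^{-1/p}(x',y')}$ via submultiplicativity, and then applying Minkowski's integral inequality and Hölder to separate the two factors so each uniform one-parameter bound can be used. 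This telescoping estimate, together with careful bookkeeping of the exponents $p/q$ versus $q/p$, is the crux; everything else is a direct appeal to the cited theorems and Fubini.
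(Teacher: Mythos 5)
Your proposal follows the paper's route in both directions: necessity via Proposition \ref{prop:Rou} plus a slice-wise application of Theorem \ref{th:Treil} and Fubini (the paper's ``simple iteration argument''), and sufficiency by testing the tensorised Riesz transforms against product-type functions with an approximate identity in the complementary variable, using Lebesgue differentiation to extract uniform slice-wise boundedness, and then invoking Theorem \ref{th:Treil} on the slices. Up to that point your argument and the paper's coincide.

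The one place you go beyond the paper is the step you yourself flag as the crux: that uniform one-parameter $A_p$ in each variable implies the product condition. The paper merely asserts that this ``follows easily'', whereas you propose a telescoping argument through \eqref{eq:Roudenko}; as sketched, that argument does not close. After inserting $W^{-1/p}(x',y)$ and using submultiplicativity, the inner average becomes $\frac{1}{|B|}\int_B a(x,x',y)\,b(x',y)\,\dd x'$ with $a(x,x',y)=\norm{W^{1/p}(x,y)W^{-1/p}(x',y)}^q$ and $b(x',y)=\frac{1}{|B'|}\int_{B'}\norm{W^{1/p}(x',y)W^{-1/p}(x',y')}^q\,\dd y'$, and the two factors remain coupled through the averaged variable $x'$ (and through $y$). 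The slice conditions only give control of $\int_B\bigl(\frac{1}{|B|}\int_B a\,\dd x'\bigr)^{p/q}\frac{\dd x}{|B|}$ for a.e.\ fixed $y$ and of $\int_{B'}b^{p/q}\,\frac{\dd y}{|B'|}$ for a.e.\ fixed $x'$; a H\"older separation in $x'$ would need integrability of $a$ or $b$ in $x'$ to a higher power (i.e.\ some reverse-H\"older or self-improvement input for the slice weights), which you have not established, and Minkowski's inequality does not decouple the product either. So this step needs either that additional input or a different argument (for instance, at the level of the averaged norms $\rho_{p,B\times B'}$ built from the one-variable averages); to be fair, the paper supplies no details here either, so your write-up is no less complete than the published proof, but the bridging estimate as you describe it should not be presented as routine bookkeeping of exponents.
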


\begin{proof}
Suppose $A_p(N,D,\cS_D)$. Then  Proposition \ref{prop:Rou} shows that $W(x,y)$ is uniformly  $A_p$ in each variable separately (a.e.). We can then use Theorem~\ref{th:Treil} together with a simple iteration argument to deduce that the operators $\tilde{R}_{i}^x,\tilde{R}_j^y\colon
  L^p(W)\rightarrow L^p(W)$ are bounded for $i=1,2,\dotsc,m$ and $j=1,2,\dotsc,n$. Conversely, suppose that $\tilde{R}_{i}^x,\tilde{R}_j^y\colon
  L^p(W)\rightarrow L^p(W)$ are bounded for all $i=1,2,\dotsc,m$ and $j=1,2,\dotsc,n$. 
  Take any $f=(f_i)_{i=1}^N$ with $f_i\in C^\infty_c(\R^n)$, $i=1,\dotsc, N$, and fix $x_0\in \R^m$.  Let $\phi_\epsilon\in C^\infty_c(\R^m)$ be an approximation to the identity centered at $x_0\in \R^m$. We let $R_jf$ denote the vector $(R_jf_i)_{i=1}^m$. Then using the boundedness of $\tilde{R}_{j}^y$,
\begin{align*}
\int_{\R^m}\int_{\R^n} &\phi_\epsilon(x)|W^{1/p}(x,y)R_jf(y)|^p \dd x \dd y\\
&\leq C
\int_{\R^m}\int_{\R^n} \phi_\epsilon(x)|W^{1/p}(x,y)f(y)|^p \dd x \dd y.
\end{align*}
We let $\epsilon\rightarrow 0$ to conclude that almost surely
$$\int_{\R^n} |W^{1/p}(x_0,y)R_jf(y)|^p  \dd y
\leq C \int_{\R^n} |W^{1/p}(x_0,y)f(y)|^p \dd y,\qquad x_0\in\R^m.$$
We now use Theorem \ref{th:Treil} to conclude that $y\rightarrow W(x,y)$ is uniformly in $A_p$ for a.e.\ $x$. A similar argument using $\tilde{R}_{i}^x$ shows that $x\rightarrow W(x,y)$ is uniformly in $A_p$ for a.e.\ $y$. Using these uniform bounds it follows easily that $W \in A_p(N,D,\cS_D)$.
\end{proof}

We can now give a sufficient condition for membership in $ A_p(N,D,\cS_D)$.

\begin{theorem}\label{th:An}
  Let $W=(w_{ij})\colon D\to\C^{N\times N}$ be a matrix weight which
  is invertible a.e. Fix $1<p<\infty$ and denote the entries of powers
  of $W$ by $W^{s}=(w_{ij}^{(s)})$, where $s$ is any real
  number. Suppose that
  \begin{equation*}
    w_{kk}^{(\frac{2}{p})}{w_{kk}^{(-\frac{2}{p})}}\in L^\infty,\qquad k=1,2,\dotsc,N
  \end{equation*}
  and that $(w_{kk}^{(\frac{2}{p})})^\frac{p}{2}\in A_p(1)$ for
  $k=1,2,\dotsc,N$. Then $W\in A_p(N,D,\cS_D)$.
\end{theorem}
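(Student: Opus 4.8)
The plan is to reduce the matrix assertion to scalar facts via the Riesz-transform characterisation of $A_p$: by Theorem~\ref{th:Treil} (and Corollary~\ref{cor:treil}, together with their periodic analogues) it suffices to show that on $L^p(W)$ each Riesz transform $R_j$ --- in the product case, each one-parameter transform $\tilde{R}_i^x$, $\tilde{R}_j^y$ --- is bounded. The mechanism is that these operators act coordinatewise: writing a (nice) vector field as $f=\sum_{k=1}^N f_ke_k$ with $f_k=\inner{f}{e_k}=P_kf$, we have $R_jf=\sum_{k=1}^N(R_jf_k)e_k$, and likewise for $\tilde{R}_i^x$, $\tilde{R}_j^y$. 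Hence the single matrix estimate we want will follow by combining, for each coordinate $k$, a scalar weighted bound for the Riesz transform with the boundedness of the coordinate projection $P_k$ supplied by Theorem~\ref{th:projection}.

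To set this up, note that positive definiteness of $W$ gives $\norm{W^{\frac{1}{p}}(z)e_k}^2=\inner{e_k}{W^{\frac{2}{p}}(z)e_k}=w_{kk}^{(\frac{2}{p})}(z)$ and, likewise, $\norm{W^{-\frac{1}{p}}(z)e_k}^2=w_{kk}^{(-\frac{2}{p})}(z)$ (as computed in the proof of Theorem~\ref{th:projection}). Applying linearity and the triangle inequality, first pointwise and then in $L^p$, we obtain
\[
  \norm{R_jf}_{L^p(W)}\le\sum_{k=1}^N\norm[\big]{(R_jf_k)\,(w_{kk}^{(\frac{2}{p})})^{1/2}}_{p}=\sum_{k=1}^N\norm{R_jf_k}_{L^p((w_{kk}^{(\frac{2}{p})})^{p/2})}.
\]
By hypothesis $(w_{kk}^{(\frac{2}{p})})^{\frac{p}{2}}\in A_p(1)$, so the classical scalar weighted norm inequalities for the Riesz transforms furnish a finite $C_k$ with $\norm{R_jf_k}_{L^p((w_{kk}^{(\frac{2}{p})})^{p/2})}\le C_k\norm{f_k}_{L^p((w_{kk}^{(\frac{2}{p})})^{p/2})}$. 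Finally, $f_k=P_kf$, and by Theorem~\ref{th:projection} the projection $P_k\colon L^p(W)\to L^p((w_{kk}^{(\frac{2}{p})})^{p/2})$ is bounded exactly when
\[
  \bigl((w_{kk}^{(\frac{2}{p})})^{\frac{p}{2}}\bigr)^{1/p}\norm{W^{-\frac{1}{p}}e_k}=\bigl(w_{kk}^{(\frac{2}{p})}\,w_{kk}^{(-\frac{2}{p})}\bigr)^{1/2}\in L^\infty,
\]
which is precisely the first hypothesis. Chaining these inequalities and summing over $k=1,\dots,N$ gives $\norm{R_jf}_{L^p(W)}\le C\norm{f}_{L^p(W)}$ with $C$ independent of $f$; as $j$ was arbitrary, Theorem~\ref{th:Treil} yields $W\in A_p(N,\R^d)$.

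For $D=\R^m\times\R^n$ one repeats the argument with $R_j$ replaced by $\tilde{R}_i^x$ or $\tilde{R}_j^y$ and Theorem~\ref{th:Treil} replaced by Corollary~\ref{cor:treil}. The only additional ingredient is the boundedness of $\tilde{R}_i^x$ and $\tilde{R}_j^y$ on the scalar space $L^p((w_{kk}^{(\frac{2}{p})})^{p/2})$, which follows from the scalar instance of Proposition~\ref{prop:Rou} --- a product $A_p(1)$ weight restricts to a uniformly $A_p$ weight in each variable separately --- together with Fubini's theorem and the one-parameter scalar Riesz estimate applied on almost every slice with a uniform constant. The cases $D=\T^d$ and $D=\T^m\times\T^n$ are treated identically using the periodic analogues of Theorem~\ref{th:Treil} and Corollary~\ref{cor:treil}.

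I expect no genuine obstacle here: the two substantive inputs --- the scalar $A_p$ Calderón--Zygmund estimate and the projection Theorem~\ref{th:projection} --- are already in hand, and everything else is the triangle inequality plus the identities $\norm{W^{\pm\frac{1}{p}}e_k}^2=w_{kk}^{(\pm\frac{2}{p})}$. The only points that will require a little care are (i) checking that $R_j$, $\tilde{R}_i^x$, $\tilde{R}_j^y$ are defined on and preserve a dense subclass of $L^p(W)$ on which the pointwise manipulations above are legitimate --- routine, since the hypotheses force $z\mapsto(w_{kk}^{(\frac{2}{p})}(z))^{p/2}=\norm{W^{1/p}(z)e_k}^p$ to be locally integrable --- and (ii) in the product setting, the passage from the one-directional scalar Riesz bound to the genuinely one-parameter weighted theory via Proposition~\ref{prop:Rou}.
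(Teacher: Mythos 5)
Your proposal is correct and follows essentially the same route as the paper: decompose $f=\sum_k (P_kf)e_k$, use the coordinatewise action of the (one-parameter) Riesz transforms, the identity $\norm{W^{\pm 1/p}e_k}^2=w_{kk}^{(\pm 2/p)}$, the scalar $A_p$ bound for each coordinate weight, and Theorem~\ref{th:projection} for the projections, then conclude via Theorem~\ref{th:Treil} (resp.\ Corollary~\ref{cor:treil} in the product case). The extra care you take with the product-case scalar bounds and with dense subclasses only makes explicit what the paper leaves implicit.
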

\begin{proof}
  Take $f\in L^p(W)\cap C^\infty_c(D)$. We write $f=\sum_{j=1}^N f_j
  e_j=\sum_{j=1}^N P_j(f)e_j$, and note that by definition the
  vector-valued operators $\tilde{R}_{i}^x$ and $\tilde{R}_j^y$ act coor\-di\-nate-wise on $f$, so it
  follows that for  $K\in \{\tilde{R}_{i}^x\}_{i=1}^m\cup
  \{\tilde{R}_j^y\}_{j=1}^n$,
  \begin{equation*}
    Kf:= \sum_{j=1}^N (Kf_j) e_j.
  \end{equation*}
  According to Corollary~\ref{cor:treil}, the scalar-valued
  transform $K$ is bounded on
  $L^p((w_{kk}^{(\frac{2}{p})})^{\frac{p}{2}})$ for $k=1,2,\dotsc,N$, so
  we obtain
\begin{align*}
\|Kf\|_{L^p(W)}&\leq 
\sum_{j=1}^N \|(Kf_j) e_j\|_{L^p(W)}\\
&=\sum_{j=1}^N \|Kf_j\|_{L^p((w_{jj}^{(\frac{2}{p})})^\frac{p}{2})}\\
&\leq C\sum_{j=1}^N \|f_j\|_{L^p((w_{jj}^{(\frac{2}{p})})^\frac{p}{2})}\\
&\leq C'\|f\|_{L^p(W)},
\end{align*}
where we used that $P_j\colon L^p(W)\rightarrow
L^p((w_{jj}^{\frac{2}{p}})^\frac{p}{2})$ is bounded by
Theorem~\ref{th:projection}.  Now we use Corollary~\ref{cor:treil} to
conclude that $W\in  A_p(N,D,\cS_D)$.
\end{proof}
Note that Theorem~\ref{th:An} gives us an easily verifiable (at least
for $p=2$) sufficient condition for $W\in A_p(N)$. It is known that
$W\in A_p(N)$ implies that $(w_{kk}^{(\frac{2}{p})})^\frac{p}{2}\in
A_p(1)$ for $k=1,2,\dotsc, N$, indicating a possibility that the
conditions of Theorem \ref{th:An} in fact characterize $A_p(N)$.
However, this is not the case as the following example illustrates.
\begin{example}
  Let $W$ be given by
  \begin{equation*}
    [0,1]\ni x\mapsto W(x)=\begin{pmatrix}
      \sqrt{x}+\tfrac{1}{\sqrt{x}}&\tfrac{i}{\sqrt{x}}\\
      -\tfrac{i}{\sqrt{x}}&\tfrac{1}{\sqrt{x}}
    \end{pmatrix}.
  \end{equation*}
  Then $W\in A_2(2)$ but \eqref{eq:projectionthm2} with $p=2$ fails
  to hold. Indeed, $\det(W)\equiv1$, so
  \begin{equation*}
    w_{11}w_{11}^{(-1)}=w_{22}w_{22}^{(-1)}=1+\frac{1}{x}\not\in L^\infty,
  \end{equation*}
  but
  \begin{equation*}
    W_{a,b}=\int_a^bW(x)\dd x=
    \begin{pmatrix}
      \frac{2}{3}(b^{\frac{3}{2}}-a^{\frac{3}{2}})+2(\sqrt{b}-\sqrt{a})&2i(\sqrt{b}-\sqrt{a})\\
      -2i(\sqrt{b}-\sqrt{a})&2(\sqrt{b}-\sqrt{a})
    \end{pmatrix}
  \end{equation*}
and
\begin{equation*}
  W^{(-1)}_{a,b}=\int_a^bW\inv(x)\dd x=\begin{pmatrix}
      2(\sqrt{b}-\sqrt{a})&-2i(\sqrt{b}-\sqrt{a})\\
      2i(\sqrt{b}-\sqrt{a})&\frac{2}{3}(b^{\frac{3}{2}}-a^{\frac{3}{2}})+2(\sqrt{b}-\sqrt{a})
  \end{pmatrix}
\end{equation*}
so
\begin{equation*}
  W_{a,b}W^{(-1)}_{a,b}=\tfrac{4}{3}((b-a)^2-\sqrt{ab}(\sqrt{b}-\sqrt{a})^2)I_2
\end{equation*}
and hence
\begin{align*}\allowdisplaybreaks
  \norm[\big]{(\tfrac{1}{b-a}W_{a,b})^{\frac{1}{2}}(\tfrac{1}{b-a}W^{(-1)}_{a,b})^{\frac{1}{2}}}_F&=\tfrac{1}{b-a}\sqrt{\tr\bigl((W^{(-1)}_{a,b})^{\frac{1}{2}}W_{a,b}(W^{(-1)}_{a,b})^{\frac{1}{2}}\bigr)}\\
  &=\tfrac{1}{b-a}\sqrt{\tr\bigl(W_{a,b}W_{a,b}^{(-1)}\bigr)}\\
  &=\tfrac{1}{b-a}\sqrt{\tfrac{8}{3}(b-a)^2-\sqrt{ab}(\sqrt{b}-\sqrt{a})^2}\le\tfrac{2\sqrt{2}}{\sqrt{3}}, 
  \end{align*}
  where the Frobenius norm was used for convenience.
\end{example}

\section{An application to vector valued singular integral operators}
Let us consider singular integral operators on the Euclidean product space
$\mathbb{R}^n\times \mathbb{R}^m$. Recall that a scalar weight
$w(x,y)$ satisfies the (product) Muckenhoupt $A_p(1,\mathbb{R}^n\times
\mathbb{R}^m)$-condition precisely when $w$ is uniformly in $A_p(1)$ for
each variable $x$ and $y$ separately. This makes it very easy to study
singular integral operators with a corresponding product structure on
$L^p(\mathbb{R}^n\times \mathbb{R}^m,w)$ using a simple iteration
argument.

For example, the product Hilbert transform
\begin{equation*}
  f\rightarrow \text{p.v.} \frac1 {xy} *  f
\end{equation*}
is bounded on $L^p(\mathbb{R}\times \mathbb{R},w)$,
$1<p<\infty$, whenever $w\in A_p(\mathbb{R}\times \mathbb{R}) $.

The case when the kernel is not separable but otherwise resemble a product Hilbert transform is much more complicated and has been studied in  e.g. \cite{MR664621}.

Suppose that $K$ is locally integrable on $\mathbb{R}^n\times \mathbb{R}^m$ away from the cross $\{x=0\}\cup
\{y=0\}$.

We let 
\begin{equation*}
  \Delta_h^1 K(x,y)=K(x+h,y)-K(x,y),
\end{equation*} 
\begin{equation*}
  \Delta_k^2 K(x,y)=K(x,y+k)-K(x,y),
\end{equation*} 
and 
\begin{equation*}
\Delta_{h,k}^{1,2} K(x,y)=\Delta_h^1(\Delta_k^2(K)).
\end{equation*}
The following 5 technical conditions for some $A<\infty$ and $\eta>0$
turn out to be important to establish boundedness of the operator
induced by $K$:

\begin{enumerate}
\item[(C.1)] $\big|\iint_{\alpha_1<|x|<\alpha_2,\beta_1<|y|<\beta_2} K(x,y) \dd x \dd y\big|\leq A$ for all $0<\alpha_1<\alpha_2$ and  $0<\beta_1<\beta_2$.
\item[(C.2)] For $K_1$ given by $K_1(x)=\int_{\beta_1<|y|<\beta_2}
  K(x,y) \dd y$ then $|K_1(x)|\leq A|x|^{-n}$ for all
  $0<\beta_1<\beta_2$, $|\Delta_h^1 K_1(x)|\leq
  A|h|^\eta|x|^{-n-\eta}$ for $|x|\geq 2|h|$, with a similar condition
  for $K_2(y)=\int_{\alpha_1<|x|<\alpha_2} K(x,y) \dd x$.
\item[(C.3)] $|K(x,y)|\leq A|x|^{-n}|y|^{-m}.$
\item[(C.4)] $|\Delta_h^1 K(x,y)|\leq A|h|^\eta|x|^{-n-\eta}|y|^{-m}$ if $|x|\geq 2|h|$ with a similar condition on
$\Delta_k^2 K(x,y)$.
\item[(C.5)] $|\Delta_{h,k}^{1,2} K(x,y)|\leq A (|h||k|)^\eta |x|^{-n-\eta}|y|^{-m-\eta}$ if $|x|\geq 2|h|$ and $|y|\geq 2|k|$. 
\end{enumerate}

\begin{theorem}[{\cite{MR664621}}]\label{th:stein}
  Let $1<p<\infty$. Suppose $K$ is locally integrable on
  $\mathbb{R}^n\times \mathbb{R}^m$ away from the cross $\{x=0\}\cup
  \{y=0\}$. Assume that $K$ satisfies $(C.1)-(C.5)$. Then the
  truncated kernels
  \begin{equation*}
    K_\epsilon^N(x,y)=K(x,y)\chi_{\epsilon_1<|x|<N_1}(x)\chi_{\epsilon_2<|y|<N_2}(y)
  \end{equation*}
  induce a uniformly bounded family of operators
  \begin{equation*}
    T_\epsilon^N(f):=f*K_\epsilon^N
  \end{equation*}
  on $L^p(\mathbb{R}^n\times \mathbb{R}^m,w)$ whenever $w\in
  A_p(1,\mathbb{R}^n\times \mathbb{R}^m)$. Morover,
  if 
  \begin{equation*}
    \smashoperator{\iint_{\alpha_1<|x|<\alpha_2, \beta_1<|y|<\beta_2}}
    K_\epsilon^N(x,y) \dd x \dd y,\quad \smashoperator{\int_{\alpha_1<|y|<\alpha_2}}
    K_\epsilon^N(x,y) \dd y, \quad\text{and} \quad
    \smashoperator{\int_{\beta_1<|x|<\beta_2}} K_\epsilon^N(x,y)
    \dd x
  \end{equation*}
  converge a.e.\ to limits as $\epsilon\rightarrow 0$ and
  $N\rightarrow \infty$ for all $0<\alpha_1<\alpha_2$,
  $0<\beta_1<\beta_2$, then for every $f\in L^p(\mathbb{R}^n\times
  \mathbb{R}^m,w)$, $T_K(f):=\lim_{\epsilon\rightarrow 0,N\rightarrow
    \infty} T_\epsilon^N(f)$ defines a bounded operator on
  $L^p(\mathbb{R}^n\times \mathbb{R}^m,w)$ for $w\in
  A_p(1,\mathbb{R}^n\times \mathbb{R}^m)$.
\end{theorem}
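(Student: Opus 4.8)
The plan is to run the two-parameter Calder\'on--Zygmund programme of Fefferman and Stein \cite{MR664621}: establish unweighted $L^2$-boundedness of the truncations from the cancellation hypotheses, then control the maximal truncated operator by the strong maximal function $M_s$ in $L^p(w)$, and finally bound $M_s$ on $L^p(w)$ using that a scalar product $A_p$ weight is exactly a weight that is uniformly $A_p(1)$ in each variable separately \cite{MR766959}. Write $T^*f:=\sup_{\epsilon,N}|T_\epsilon^N f|$ for the maximal truncation.

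First I would show $\sup_{\epsilon,N}\|T_\epsilon^N\|_{L^2\to L^2}<\infty$. Decomposing the kernel dyadically in each of the two variables and grouping the pieces by their bi-annulus of scales $(2^j,2^k)$, the double cancellation in (C.1) kills the diagonal contributions, while the two independent $\eta$-gains in the mixed smoothness (C.5), together with the size bound (C.3), make the off-diagonal operator norms summable in each parameter. A Cotlar--Stein almost-orthogonality estimate in both parameters then yields the uniform $L^2$-bound.

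The heart of the argument is the estimate $\|T^*f\|_{L^p(w)}\le C\|M_s f\|_{L^p(w)}$ for every product $A_\infty$ weight $w$. Here one uses the partial kernel conditions (C.2) and the one- and two-variable H\"ormander conditions (C.4)--(C.5) to localise $T$ over rectangles and extract smoothness gains. This is where the main obstacle lies: the one-parameter good-$\lambda$ inequality relating a singular integral to the maximal function rests on a Whitney decomposition of the level set $\{M_s f>\lambda\}$, and in the product setting this decomposition does not iterate, so the naive good-$\lambda$ inequality with respect to $M_s$ is false. The remedy I would use is a genuinely two-parameter covering argument (Journ\'e's covering lemma), which controls the rectangular enlargement of an arbitrary open set and thereby allows the smoothness gains of (C.4)--(C.5) to be summed over the Whitney rectangles of $\{M_s f>\lambda\}$ without loss.

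It remains to dominate $M_s$ and to assemble the statement. Since $M_s$ is pointwise bounded by the composition of the two one-parameter Hardy--Littlewood maximal operators, and $w$ is uniformly $A_p(1)$ in each variable \cite{MR766959}, iterating the one-parameter weighted maximal inequality (with the uniform constant in the frozen variable, then applying Fubini) gives $\|M_s f\|_{L^p(w)}\le C\|f\|_{L^p(w)}$. Chaining the two estimates produces $\sup_{\epsilon,N}\|T_\epsilon^N f\|_{L^p(w)}\le\|T^*f\|_{L^p(w)}\le C\|f\|_{L^p(w)}$, the asserted uniform boundedness. For the final clause, the hypothesised a.e.\ convergence of the averaged truncated kernels gives convergence of $T_\epsilon^N f$ on the dense class $C_c^\infty(\R^n\times\R^m)$; the uniform $L^p(w)$-bound then upgrades this, by a Banach--Steinhaus argument, to convergence for every $f\in L^p(\R^n\times\R^m,w)$, so $T_K$ is well defined and inherits the bound.
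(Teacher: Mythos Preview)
The paper does not prove this theorem at all: it is quoted from \cite{MR664621} and used as a black box in the proof of Corollary~\ref{cor:Tk}. There is therefore nothing in the paper to compare your argument against.

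That said, your outline is a reasonable sketch of the two-parameter Calder\'on--Zygmund theory that underlies \cite{MR664621}, with one caveat. The displayed inequality $\|T^*f\|_{L^p(w)}\le C\|M_s f\|_{L^p(w)}$ is not literally what one proves; the good-$\lambda$ mechanism (or the sharp-function route actually used in \cite{MR664621}) yields instead a distributional comparison of $T^*f$ with $M_s f$ and a suitable sharp maximal function, from which the $L^p(w)$ bound follows once one knows $M_s$ is bounded on $L^p(w)$. Also, invoking Journ\'e's covering lemma is anachronistic relative to \cite{MR664621}, whose original argument predates it and goes through a product sharp-function estimate; Journ\'e's lemma gives an alternative, now standard, route to the same conclusion.
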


A natural extension of Theorem \ref{th:stein} would be to lift the
operator $T_K$ to the matrix-weighted case. There are at present some
technical obstacles that prevent us from carrying out this program in
full generality, but we can use the results in the previous sections
to obtain a partial result.

\begin{corollary}\label{cor:Tk}
  Let $1<p<\infty$ and let $K\colon\mathbb{R}^n\times
  \mathbb{R}^m\rightarrow \mathbb{C}$ be a kernel of the type
  considered in Theorem \ref{th:stein}. Suppose $W=(w_{ij})\colon
  \mathbb{R}^n\times \mathbb{R}^m\to\C^{N\times N}$ be a matrix weight
  which is invertible a.e. Then the operator $T_K$ lifted to the
  vector valued setting is bounded on $L^p(\mathbb{R}^n\times
  \mathbb{R}^m,W)$ provided
  that
  \begin{equation*}
    w_{kk}^{(\frac{2}{p})}{w_{kk}^{(-\frac{2}{p})}}\in
    L^\infty(\mathbb{R}^n\times \mathbb{R}^m),\qquad
    k=1,2,\dotsc,N
  \end{equation*}
  and that $(w_{kk}^{(\frac{2}{p})})^\frac{p}{2}\in
  A_p(1,\mathbb{R}^n\times \mathbb{R}^m)$ for $k=1,2,\dotsc,N$.
\end{corollary}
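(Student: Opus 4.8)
The plan is to combine Theorem~\ref{th:An} with the scalar product-kernel result Theorem~\ref{th:stein}, using the coordinate-projection machinery of Theorem~\ref{th:projection} exactly as in the proof of Theorem~\ref{th:An}. First I would observe that the two standing hypotheses are precisely the hypotheses of Theorem~\ref{th:An} with $D=\mathbb{R}^n\times\mathbb{R}^m$, so Theorem~\ref{th:An} immediately yields $W\in A_p(N,\mathbb{R}^n\times\mathbb{R}^m,\cS_D)$. However, for this corollary I do not actually need the full matrix $A_p$ membership of $W$; what I need is that the scalar weights $(w_{kk}^{(2/p)})^{p/2}$ are in the product class $A_p(1,\mathbb{R}^n\times\mathbb{R}^m)$, which is given, together with the coordinate decomposition from Theorem~\ref{th:projection}.

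The main step is the following estimate. Take $f=(f_j)_{j=1}^N$ with each $f_j\in C^\infty_c(\mathbb{R}^n\times\mathbb{R}^m)$ (a dense subspace of $L^p(W)$ once one checks density, which follows from local integrability of $W$ and $W^{-q/p}$). Write $f=\sum_{j=1}^N P_j(f)e_j=\sum_{j=1}^N f_j e_j$. The lifted operator acts coordinatewise, $T_K f=\sum_{j=1}^N (T_K f_j)e_j$, where $T_K f_j$ is the scalar operator from Theorem~\ref{th:stein} applied to the scalar function $f_j$. Then, using the triangle inequality in $L^p(W)$ together with the identity $\|g\,e_j\|_{L^p(W)}=\|g\|_{L^p((w_{jj}^{(2/p)})^{p/2})}$ (which is the content of the computation $\|W^{1/p}e_j\|^p=\langle e_j,W^{2/p}e_j\rangle=w_{jj}^{(2/p)}$, i.e.\ the weight seen by the $j$th coordinate is $(w_{jj}^{(2/p)})^{p/2}$), I get
\begin{equation*}
  \|T_K f\|_{L^p(W)}\le\sum_{j=1}^N\|(T_K f_j)e_j\|_{L^p(W)}=\sum_{j=1}^N\|T_K f_j\|_{L^p((w_{jj}^{(2/p)})^{p/2})}\le C\sum_{j=1}^N\|f_j\|_{L^p((w_{jj}^{(2/p)})^{p/2})},
\end{equation*}
where the last inequality is Theorem~\ref{th:stein} applied with $w=(w_{jj}^{(2/p)})^{p/2}\in A_p(1,\mathbb{R}^n\times\mathbb{R}^m)$. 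Finally, since $\|f_j\|_{L^p((w_{jj}^{(2/p)})^{p/2})}=\|f_j e_j\|_{L^p(W)}=\|P_j f\|_{L^p((w_{jj}^{(2/p)})^{p/2})}$ and $P_j\colon L^p(W)\to L^p((w_{jj}^{(2/p)})^{p/2})$ is bounded by Theorem~\ref{th:projection} (this uses the first hypothesis $w_{kk}^{(2/p)}w_{kk}^{(-2/p)}\in L^\infty$, which is exactly \eqref{eq:projectionthm2}), I bound the sum by $C'\|f\|_{L^p(W)}$. Density then extends $T_K$ to all of $L^p(\mathbb{R}^n\times\mathbb{R}^m,W)$.

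A few points deserve care. One must check that the scalar operator $T_K$ from Theorem~\ref{th:stein} is well-defined on each $f_j$, i.e.\ that the convergence hypotheses on the truncated kernels in Theorem~\ref{th:stein} are inherited — but these are hypotheses on the kernel $K$ alone and are assumed as part of ``$K$ of the type considered in Theorem~\ref{th:stein},'' so nothing new is required. One should also note that $T_K$ lifted to the vector-valued setting really does act coordinatewise: the convolution kernel is scalar and the same for each coordinate, so $T_K(f)$ has $j$th component $T_K(f_j)$; this is the analogue of the observation in the proof of Theorem~\ref{th:An} that $\tilde R_i^x$ and $\tilde R_j^y$ act coordinatewise. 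The main (and only real) obstacle is the density argument: one needs $C^\infty_c\cap L^p(W)$ dense in $L^p(W)$ so that the a priori estimate on smooth compactly supported functions propagates; for product matrix $A_p$ weights this follows from local integrability of $W$ and $W^{-q/p}$ by the standard mollification plus truncation argument, but it should be stated. Everything else is a verbatim repeat of the estimate in the proof of Theorem~\ref{th:An}, with the Riesz transforms replaced by the single operator $T_K$.
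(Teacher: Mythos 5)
Your argument is correct and is essentially the paper's own proof: decompose $f=\sum_j f_j e_j$, use the identity $\|g\,e_j\|_{L^p(W)}=\|g\|_{L^p((w_{jj}^{(2/p)})^{p/2})}$, apply the scalar Theorem~\ref{th:stein} to each coordinate weight in $A_p(1,\mathbb{R}^n\times\mathbb{R}^m)$, and conclude with the boundedness of $P_j$ from Theorem~\ref{th:projection}. Your added remarks on density and on the coordinatewise action of the lifted $T_K$ are sensible refinements (the paper leaves them implicit), and the detour through Theorem~\ref{th:An} is, as you note, not needed.
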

\begin{proof}
  Take $f\in L^p(\mathbb{R}^n\times \mathbb{R}^m,W)\cap
  C^\infty_c(\mathbb{R}^n\times \mathbb{R}^m)$, and write write the function as
  $f=\sum_{j=1}^N f_j e_j=\sum_{j=1}^N P_j(f)e_j$. It follows that
  \begin{equation*}
    T_Kf:= \sum_{j=1}^N (T_Kf_j) e_j,
  \end{equation*}
  so
  \begin{align*}
    \|T_Kf\|_{L^p(W)}&\leq \sum_{j=1}^N \|(T_Kf_j) e_j\|_{L^p(W)}=\sum_{j=1}^N \|T_Kf_j\|_{L^p((w_{jj}^{(\frac{2}{p})})^\frac{p}{2})}\\
    &\leq  C\sum_{j=1}^N \|f_j\|_{L^p((w_{jj}^{(\frac{2}{p})})^\frac{p}{2})}\\
    &\leq C'\|f\|_{L^p(W)},
  \end{align*} 
  where we used Theorem~\ref{th:stein}, and the projection result,
  Theorem \ref{th:projection}.
\end{proof}

\begin{conjecture}
  The conclusion of Corollary \ref{cor:Tk} holds true for any matrix
  weight $W=(w_{ij})\colon \mathbb{R}^n\times
  \mathbb{R}^m\to\C^{N\times N}$ in the set of Muckenhoupt weights
  $A_p(N,\mathbb{R}^n\times \mathbb{R}^m)$, $1<p<\infty$.
\end{conjecture}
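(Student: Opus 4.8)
The plan is to prove the conjecture by reducing the bi-parameter matrix-weighted estimate to a \emph{one-parameter} matrix-weighted Calder\'on--Zygmund estimate together with an iteration argument, paralleling the structure of the scalar proof of Theorem~\ref{th:stein}. Concretely, suppose $W\in A_p(N,\R^n\times\R^m)$. By Proposition~\ref{prop:Rou} and its analogue in the other variable, the frozen weights $x\mapsto W(x,y)$ and $y\mapsto W(x,y)$ are uniformly in $A_p(N,\R^n)$ and $A_p(N,\R^m)$, respectively. The crucial ingredient is then a one-parameter statement: if $V\in A_p(N,\R^d)$ and $S$ is a Calder\'on--Zygmund operator whose kernel satisfies the standard (one-parameter) size and smoothness conditions, then $S$ acts boundedly on $L^p(V)$ with operator norm controlled by an $A_p$ characteristic of $V$. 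For $p=2$ this is available from the Treil--Volberg theory and its quantitative refinements; for general $1<p<\infty$ one would invoke convex-body / sparse domination for matrix weights. Granting this, one gets uniform-in-$y$ bounds for the partial operator in $x$ and uniform-in-$x$ bounds for the partial operator in $y$.

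Second, I would exploit the product structure encoded in (C.1)--(C.5). As in the scalar argument, condition (C.2) says that the partial integrals $K_1(x)=\int_{\beta_1<|y|<\beta_2}K(x,y)\,\dd y$ and $K_2$ are themselves one-parameter Calder\'on--Zygmund kernels, uniformly in the truncation parameters, while (C.3)--(C.5) control the genuinely bi-parameter part. This allows one to split $T_\epsilon^N$ into a piece that factors (after freezing one variable) through one-parameter operators in $x$ and in $y$, and a remainder treated by a bi-parameter square-function / Littlewood--Paley decomposition. The matrix-weighted bounds are then assembled exactly as in Corollary~\ref{cor:treil}: one tests against $\phi_\epsilon(x)\,|W^{1/p}(x,y)g(y)|^p$ with $\phi_\epsilon$ an approximate identity in the $x$ variable, applies the uniform one-parameter estimate in the frozen variable, lets $\epsilon\to0$ via Lebesgue differentiation, and then iterates in the other variable. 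Finally, the passage from the uniformly bounded truncated family $T_\epsilon^N$ to $T_K$ proceeds via the a.e.\ convergence hypotheses of Theorem~\ref{th:stein}, applied componentwise to $f=(f_i)_{i=1}^N$, together with Fatou's lemma in $L^p(W)$.

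The main obstacle is the one-parameter matrix-weighted $L^p$ estimate for $p\neq2$: the componentwise reduction that made Corollary~\ref{cor:Tk} elementary is no longer available, because a genuine non-diagonal matrix weight is not diagonalised by a single unitary, so the operator does not decouple across the coordinates of $\C^N$; Theorem~\ref{th:projection} only controls the scalar diagonal weights $(w_{kk}^{(2/p)})^{p/2}$, and these do not in general detect membership of $W$ in $A_p(N)$. Hence the full matrix Muckenhoupt machinery (convex-body domination, or a matrix-weighted extrapolation bootstrapping the $p=2$ Treil--Volberg result to general $p$) appears unavoidable. A secondary difficulty is the bi-parameter nature of $T_K$: even granting the one-parameter matrix result, product Calder\'on--Zygmund theory lacks a Calder\'on--Zygmund decomposition, so the iteration must be carried out for the relevant $\ell^s$-valued vector-valued extensions, and one must verify that these vector-valued matrix-weighted inequalities persist under the passage to the product setting. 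Making this rigorous, rather than the heuristic sketched above, is where the real work lies, and it is why we state the result only as a conjecture.
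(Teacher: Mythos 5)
The statement you are addressing is left unproved in the paper: it is stated as a conjecture, and the authors say explicitly that technical obstacles prevent them from going beyond the partial result of Corollary~\ref{cor:Tk}. Your text is likewise a programme rather than a proof, as you concede in your final sentence, so there is no argument in the paper to compare it with; judged on its own terms as a proof attempt, it has two concrete gaps. First, the assembly step you describe --- testing against $\phi_\epsilon(x)\,\abs{W^{1/p}(x,y)g(y)}^p$, letting $\epsilon\to 0$ by Lebesgue differentiation, and iterating ``exactly as in Corollary~\ref{cor:treil}'' --- only makes sense for operators of tensor-product form such as $\tilde{R}_i^x=R_i^x\otimes \text{Id}_y$, which act in one variable at a time. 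The operator $T_K$ of Theorem~\ref{th:stein} has a genuinely non-separable kernel, so freezing one variable does not reduce it to a one-parameter operator acting on the frozen slices, and the uniform slice-wise $A_p$ bounds furnished by Proposition~\ref{prop:Rou} cannot be fed into it directly. In the scalar setting this is exactly why the proof in \cite{MR664621} is not a naive iteration but runs through vector-valued weighted inequalities for the partial operators; you acknowledge this, but you do not supply matrix-weighted analogues of those vector-valued estimates, and supplying them is the heart of the matter, not a detail.

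Second, the one-parameter ingredient you lean on --- boundedness of general Calder\'on--Zygmund operators on $L^p(V)$ for matrix $V\in A_p$ with $p\neq 2$, with quantitative control by an $A_p$ characteristic --- is invoked as something ``one would'' obtain from convex-body or sparse methods rather than stated and used precisely, and the proposed splitting of $T_\epsilon^N$ into a separable piece plus a remainder handled by a bi-parameter square function is asserted, not constructed; nothing in conditions (C.1)--(C.5) hands you such a decomposition for free. In short, your sketch identifies the right obstructions --- essentially the same ones that led the authors to state the result only as a conjecture --- but it closes none of them, so the conjecture remains exactly that.
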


\bibliographystyle{abbrv} 
\bibliography{projection}
\end{document}